\newcommand{\Mat}{\operatorname{M}}
\newcommand{\GL}{\operatorname{GL}}
\newcommand{\Ker}{\operatorname{Ker}}
\newcommand{\im}{\operatorname{Im}}
\def\op{\text{op}}
\newcommand{\rk}{\operatorname{rk}}
\renewcommand{\setminus}{\smallsetminus}
\newcommand{\D}{\mathbb{D}}
\def\F{\mathbb{F}}
\def\calL{\mathcal{L}}
\def\calR{\mathcal{R}}
\def\calS{\mathcal{S}}
\def\calU{\mathcal{U}}
\def\calV{\mathcal{V}}
\def\calW{\mathcal{W}}
\def\calZ{\mathcal{Z}}
\def\lcro{\mathopen{[\![}}
\def\rcro{\mathclose{]\!]}}
\theoremstyle{definition}
\newtheorem{Def}{Definition}
\newtheorem{Not}[Def]{Notation}
\theoremstyle{plain}
\newtheorem{theo}{Theorem}
\newtheorem{prop}[theo]{Proposition}
\newtheorem{cor}[theo]{Corollary}
\newtheorem{lemma}[theo]{Lemma}
\newtheorem{claim}{Claim}
\theoremstyle{plain}
\theoremstyle{remark}
\title{The Flanders theorem over division rings}
\author{Cl\'ement de Seguins Pazzis\footnote{Universit\'e de Versailles Saint-Quentin-en-Yvelines, Laboratoire de Math\'ematiques
de Versailles, 45 avenue des Etats-Unis, 78035 Versailles cedex, France}
\footnote{e-mail address: dsp.prof@gmail.com}}
\begin{document}

\thispagestyle{plain}

\maketitle

\begin{abstract}
Let $\D$ be a division ring and $\F$ be a subfield of the center of $\D$ over which $\D$ has finite dimension $d$.
Let $n,p,r$ be positive integers and $\calV$ be an affine subspace of the $\F$-vector space $\Mat_{n,p}(\D)$
in which every matrix has rank less than or equal to $r$. Using a new method, we prove that $\dim_\F \calV \leq \max(n,p)\,rd$
and we characterize the spaces for which equality holds. This extends a famous theorem of Flanders which was known only for
fields.
\end{abstract}

\vskip 2mm
\noindent
\emph{AMS Classification:} 15A03, 15A30.

\vskip 2mm
\noindent
\emph{Keywords:} Rank, Bounded rank spaces, Flanders's theorem, Dimension, Division ring.


\section{Introduction}

Throughout the text, we fix a division ring $\D$, that is a non-trivial ring in which every non-zero element is invertible.
We let $\F$ be a subfield of the center $\calZ(\D)$ of $\D$ and we assume that $\D$ has finite dimension over $\F$.

Let $n$ and $p$ be non-negative integers. We denote by $\Mat_{n,p}(\D)$ the set of all matrices with $n$ rows, $p$ columns, and entries
in $\D$. It has a natural structure of $\F$-vector space, which we will consider throughout the text.
We denote by $E_{i,j}$ the matrix of $\Mat_{n,p}(\D)$ in which all the entries equal $0$, except the one at the $(i,j)$-spot, which equals $1$.
The right $\D$-vector space $\D^n$ is naturally identified with the space $\Mat_{n,1}(\D)$ of column matrices (with $n$ rows).
We naturally identify $\Mat_{n,p}(\D)$ with the set of all linear maps from the right vector space $\D^p$ to the right vector space $\D^n$.
We have a ring structure on $\Mat_n(\D):=\Mat_{n,n}(\D)$ with unity $I_n$, and its group of units is denoted by $\GL_n(\D)$.

Two matrices $M$ and $N$ of $\Mat_{n,p}(\D)$ are said to be \textbf{equivalent} when there are
invertible matrices $P \in \GL_n(\D)$ and $Q \in \GL_p(\D)$ such that $N=PMQ$ (this means that $M$ and $N$ represent the same
linear map between right vector spaces over $\D$ under a different choice of bases).
This relation is naturally extended to whole subsets of matrices.

The rank of a matrix $M \in \Mat_{n,p}(\D)$ is the rank of the family of its columns in the right $\D$-vector space $\D^n$, and it is known that
it equals the rank of the family of its rows in the left $\D$-vector space $\Mat_{1,p}(\D)$: we denote it by $\rk(M)$.
Two matrices of the same size have the same rank if and only if they are equivalent.

Given a non-negative integer $r$, a \textbf{rank-$\overline{r}$ subset} of $\Mat_{n,p}(\D)$ is a subset in which every matrix
has rank less than or equal to $r$.

Let $s$ and $t$ be non-negative integers. One defines the \textbf{compression space}
$$\calR(s,t):=\biggl\{\begin{bmatrix}
A & C \\
B & [0]_{(n-s) \times (p-t)}
\end{bmatrix} \mid A \in \Mat_{s,t}(\D), \; B \in \Mat_{n-s,t}(\D), \; C \in \Mat_{s,p-t}(\D)\biggr\}.$$
It is obviously an $\F$-linear subspace of $\Mat_{n,p}(\D)$ and a rank-$\overline{s+t}$ subset.
More generally, any space that is equivalent to a space of that form is called a compression space.

A classical theorem of Flanders \cite{Flanders} reads as follows.

\begin{theo}[Flanders's theorem]\label{maintheo}
Let $\F$ be a field, and $n,p,r$ be positive integers such that $n\geq p>r$. Let $\calV$ be a rank-$\overline{r}$ linear subspace of
$\Mat_{n,p}(\F)$.
Then, $\dim \calV \leq nr$, and if equality holds then either
$\calV$ is equivalent to $\calR(0,r)$, or $n=p$ and $\calV$ is equivalent to $\calR(r,0)$.
\end{theo}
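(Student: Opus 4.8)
The plan is to derive the bound $\dim_\F\calV\le nr$ from a single ``genericity'' lemma, with no induction on $r$, and then to settle the equality case by a direct but somewhat delicate analysis. Two reductions come first. Since the rank of a matrix is unchanged under extension of scalars, and since $\calV$ is equivalent to $\calR(0,r)$ exactly when $\dim_\F\bigl(\sum_{M\in\calV}\im(M^{T})\bigr)\le r$ and to $\calR(r,0)$ exactly when $\dim_\F\bigl(\sum_{M\in\calV}\im M\bigr)\le r$, both the inequality and the description of the equality case descend from any infinite overfield of $\F$; hence I may assume $\F$ infinite. Next, let $\rho$ be the largest rank of an element of $\calV$ (if $\rho=0$ there is nothing to prove); using an equivalence I arrange that the $n\times p$ matrix $J_\rho$ having $I_\rho$ in the upper-left corner and zeros elsewhere belongs to $\calV$, which is legitimate since $\rho\le r<p\le n$.

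Decompose each $M\in\calV$ into blocks, with upper-left block $A(M)$ of size $\rho\times\rho$, lower-right block $D(M)$, and off-diagonal blocks $B(M)$ and $C(M)$. Applying the rank bound to the pencil $J_\rho+tM$, one sees that for all but finitely many $t$ the block $I_\rho+tA(M)$ is invertible, and computing the rank through the Schur complement forces $D(M)=t\,C(M)(I_\rho+tA(M))^{-1}B(M)$; this is an identity of rational functions of $t$ whose right-hand side vanishes at $t=0$, so $D(M)=0$ for every $M\in\calV$. Running the same computation on $J_\rho+M+M'$, where $M'$ ranges over the space $\calW$ of elements of $\calV$ with zero first $\rho$ rows, and comparing the Taylor coefficients in $t$, yields $C(M')B(M)=0$. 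Setting $U:=\sum_{M\in\calV}\im B(M)\subseteq\F^\rho$ and $u:=\dim U$, this shows that $\calW$ embeds in the space $\{C:C|_U=0\}$, of dimension $(n-\rho)(\rho-u)$, while the ``first $\rho$ rows'' projection carries $\calV$ into $\Mat_{\rho,\rho}(\F)\oplus\{B:\im B\subseteq U\}$, of dimension $\rho^2+u(p-\rho)$. Adding these, $\dim_\F\calV\le n\rho+u(p-n)\le n\rho\le nr$, and the last inequality is strict unless $\rho=r$.

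For the equality case I take $\rho=r$ and assume that every inequality above is an equality. If $n>p$ then $u=0$, so all $B(M)$ vanish, $\calV\subseteq\calR(0,r)$, and equality of dimensions gives $\calV=\calR(0,r)$; the same holds if $n=p$ and $u=0$. There remains the case $n=p$, $u\ge1$, where the real work lies. Here equality makes $\{B:\im B\subseteq U\}$ and $\{C:C|_U=0\}$ fully realized inside $\calV$ and makes the ``first $r$ rows'' projection onto $\Mat_{r,r}(\F)\oplus\{B:\im B\subseteq U\}$ surjective, so for each $A\in\GL_r(\F)$ and each admissible $B$ a Schur-complement computation gives $C_0\,A^{-1}B=0$ for every $C_0$ with $C_0|_U=0$; substituting a transposition matrix for $A$ and using $p-r\ge1$ then forces $u=r$. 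Once $u=r$ we obtain $\calW=0$, the ``first $r$ rows'' map becomes an isomorphism $\calV\to\Mat_{r,p}(\F)$, and $\calV$ is the graph of a linear map $N\mapsto C(N)$ into the lower-left block (the lower-right block being $0$). Testing the rank condition on the rank-$r$ matrices $N=\begin{bmatrix}P & B\end{bmatrix}$ with $P\in\GL_r(\F)$ — which forces the rows of $\begin{bmatrix}C(N) & 0\end{bmatrix}$ to lie in the row space of $N$ — together with the fact that $\GL_r(\F)$ spans $\Mat_{r,r}(\F)$, and a final rank-one tensor argument, shows $C\equiv0$, i.e. $\calV=\calR(r,0)$.

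The inequality is the easy half, being immediate once the genericity lemma is in hand and requiring no induction. The main obstacle is the equality analysis for $n=p$: showing that $u$ can only be $0$ or $r$, and that in the second case the graph map vanishes. Both are extracted by forcing the residual rank constraints through carefully chosen elements of $\calV$ (a transposition matrix, a rank-one tensor), and it is precisely at this point that the hypothesis $p>r$ is used.
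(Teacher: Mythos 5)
Your argument from the point where $\F$ is assumed infinite is essentially sound --- it is in substance Flanders's original proof: the Schur-complement computation on the pencil $J_\rho+tM$, the vanishing of $D$, the orthogonality relation $C(M')B(M)=0$, and the dimension count $\dim\calV\le\rho^2+u(p-\rho)+(n-\rho)(\rho-u)=n\rho+u(p-n)$ are all correct, and the equality analysis can be pushed through as you sketch (one small repair: to force $U\in\{0,\F^r\}$ you should invoke the irreducibility of the action of all of $\GL_r(\F)$ on $\F^r$, since transposition matrices alone leave invariant, e.g., the span of the all-ones vector). This is a genuinely different route from the paper, which never invokes genericity in a scalar parameter and instead runs an induction on $(n,p,r)$ built on the extraction lemma, a hyperplane lemma and range-compatible homomorphisms, precisely so as to work uniformly over arbitrary finite fields and division rings.

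The gap is the very first reduction. Extension of scalars preserves the rank of each individual matrix, but it does \emph{not} preserve the property of being a rank-$\overline{r}$ subspace: the $\K$-span of $\calV$ inside $\Mat_{n,p}(\K)$ contains new $\K$-linear combinations of a basis of $\calV$, and an $(r+1)\times(r+1)$ minor, being a homogeneous polynomial of degree $r+1$ in the coordinates, can vanish identically on $\F^{\dim\calV}$ without being the zero polynomial as soon as $r+1>\card\F$. Concretely, the subspace $\{\Diag(x,y,x+y) : x,y\in\F_2\}$ of $\Mat_3(\F_2)$ is rank-$\overline{2}$, yet its $\F_4$-span contains the invertible matrix $\Diag(\omega,1,\omega+1)$. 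So the hypothesis does not ascend to an infinite overfield, and your proof as written is only valid when $\card\F>r$ (where the ``finitely many bad $t$'' and Taylor-coefficient arguments can be run directly over $\F$ itself, as Flanders did). Closing exactly this gap for small finite fields is the content of Meshulam's paper and of the characteristic-free inductive argument of the present paper; as it stands your proposal does not cover that case.
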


The case when $n \leq p$ can be obtained effortlessly by transposing.

\vskip 3mm
Flanders's theorem has a long history dating back to Dieudonn\'e \cite{Dieudonne},
who tackled the case when $n=p$ and $r=n-1$ (that is, subspaces of singular matrices). Dieudonn\'e was motivated by
the study of semi-linear invertibility preservers on square matrices.
Flanders came actually second \cite{Flanders} and, due to his use of determinants, he was only able to prove his results over
fields with more than $r$ elements (he added the restriction that the field should not be of characteristic $2$, but
a close examination of his proof reveals that it is unnecessary).
The extension to general fields was achieved more than two decades later by Meshulam \cite{Meshulam}.
In the meantime, much progress had been made in the classification of rank-$\overline{r}$ subspaces with dimension close to the critical one, over fields with large cardinality (see \cite{AtkLloyd} for square matrices, and \cite{Beasley} for the generalization to rectangular matrices):
the known theorems essentially state that every large enough rank-$\overline{r}$ linear subspace is a subspace of a compression space.

This topic has known a recent revival. First, Flanders's theorem was extended to affine subspaces
over all fields \cite{affpres}, and the result was applied to generalize Atkinson and Lloyd's classification
of large rank-$\overline{r}$ spaces \cite{dSPclass}. Flanders's theorem has also been shown to yield
an explicit description of full-rank-preserving linear maps on matrices without injectivity assumptions \cite{localization,dSPlinpresGL}.

In a recent article, \v{S}emrl proved Flanders's theorem in the case of singular matrices over
division algebras that are finite-dimensional over their center \cite{Semrlcentralsimple},
and he applied the result to classify the linear endomorphisms of a central simple algebra that preserve invertibility.

Our aim here is to give the broadest generalization of Flanders's theorem to date.
It reads as follows:

\begin{theo}\label{Flandersskew}
Let $\D$ be a division ring and $\F$ be a subfield of its center such that $d:=[\D : \F]$ is finite.
Let $n,p,r$ be non-negative integers such that $n \geq p \geq r$.
Let $\calV$ be an $\F$-affine subspace of $\Mat_{n,p}(\D)$, and assume that it is a rank-$\overline{r}$ subset.
Then,
\begin{equation}\label{ineq}
\dim_\F \calV \leq dnr.
\end{equation}
If equality holds in \eqref{ineq}, then:
\begin{enumerate}[(a)]
\item Either $\calV$ is equivalent to $\calR(0,r)$;
\item Or $n=p$ and $\calV$ is equivalent to $\calR(r,0)$;
\item Or $(n,p,r)=(2,2,1)$, $\# \D=2$ and $\calV$ is equivalent to the affine space
$$\calU_2:=\biggl\{\begin{bmatrix}
x & 0 \\
y & x+1
\end{bmatrix} \mid (x,y)\in \D^2\biggr\}.$$
\end{enumerate}
\end{theo}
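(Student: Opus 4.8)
\emph{Proof proposal.} The plan is to follow the classical strategy for Flanders's theorem, replacing the determinant computations (which are unavailable over a division ring) by Schur-complement rank identities, and to reduce the case of a small base field to the already-known result over fields. Throughout I argue by induction on $n+p+r$; the cases $r=0$, $\min(n,p)=0$, and $p=r$ (where $\calV\subseteq\Mat_{n,p}(\D)=\calR(0,r)$, so $\dim_\F\calV\leq npd=nrd$ with equality only in case (a)) are immediate, so assume $r<p$. If every matrix of $\calV$ has rank at most $r-1$, then $\calV$ is a rank-$\overline{r-1}$ subset and the inductive hypothesis gives $\dim_\F\calV\leq dn(r-1)<dnr$; so we may assume $\calV$ contains a rank-$r$ matrix. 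Since equivalence preserves rank, $\F$-dimension and all three conclusions, we may assume $J\in\calV$, where $J$ is the $n\times p$ matrix with $I_r$ in its top-left corner and zeros elsewhere. Writing $\calV=J+\calW$ with $\calW$ an $\F$-linear subspace and using that $\calV$ is stable under scaling by $\F$, we obtain $\rk(J+tN)\leq r$ for all $N\in\calW$ and $t\in\F$; decompose $N=\begin{bmatrix}A(N) & B(N)\\ C(N) & D(N)\end{bmatrix}$ with $A(N)\in\Mat_r(\D)$.

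Suppose first that $\D$ is infinite (hence so is $\F$, a finite division ring of finite dimension over a finite field being itself finite). For all but finitely many $t\in\F$ the matrix $I_r+tA(N)$ is invertible, and the Schur-complement identity over $\D$ gives
\[ \rk(J+tN)=r+\rk\bigl(t\,D(N)-t^2\,C(N)(I_r+tA(N))^{-1}B(N)\bigr), \]
so $D(N)=t\,C(N)(I_r+tA(N))^{-1}B(N)$; clearing the denominator to an identity in $\D[t]$ and setting $t=0$ yields $D(N)=0$. Hence $\calV$ lies in the compression space $\calR(r,r)$ of matrices with zero lower-right $(n-r)\times(p-r)$ block; feeding the same identity back, one gets $C(N)B(N)=0$ for all $N\in\calW$ (and $C(N)A(N)^kB(N)=0$ for all $k\geq0$), and, bilinearizing, $C(N_1)B(N_2)+C(N_2)B(N_1)=0$.

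The crux --- and the main obstacle, as in every proof of this circle of results, here compounded by the noncommutativity of $\D$ --- is to promote these relations to a dichotomy, via dimension counts carried out over $\F$ while keeping track of the $\D$-module structure. One shows that $\dim_\F\calW\leq dnr$, and that in case of equality either $B\equiv 0$ on $\calW$, so that every matrix of $\calV$ has its last $p-r$ columns zero and thus, by equality of $\F$-dimensions, $\calV=\calR(0,r)$ (case (a)); or $C\equiv 0$ on $\calW$, forcing $\calV\subseteq\calR(r,0)$, hence $n=p$ and $\calV=\calR(r,0)$ (case (b)). (If instead both $\{B(N):N\in\calW\}$ and $\{C(N):N\in\calW\}$ are non-zero, one confines $\calV$ inside a proper compression space and reduces to a case with strictly smaller $n+p+r$.) Note that case (c) does not arise here, consistently with $\#\D\geq 3$.

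Finally, suppose $\D$ is finite; by Wedderburn's little theorem $\D$ is a field $\F_q$, with $\F=\F_{q_0}$ and $q=q_0^{d}$. Since $\D/\F$ is separable, extending scalars to $\overline\F:=\overline{\F_{q_0}}$ identifies $\Mat_{n,p}(\D)\otimes_\F\overline\F$ with a product of $d$ copies of $\Mat_{n,p}(\overline\F)$, turning $\calV$ into an $\overline\F$-affine rank-$\overline{r}$ subset of that product, of $\overline\F$-dimension $\dim_\F\calV$. Applying Flanders's theorem over the infinite field $\overline\F$ to each of the $d$ coordinate projections gives $\dim_\F\calV\leq d\,nr$, and in the equality case a Galois-descent argument (Frobenius cyclically permuting the $d$ factors, together with Hilbert 90) recovers that $\calV$ is equivalent to $\calR(0,r)$, or to $\calR(r,0)$ when $n=p$ --- except precisely when $(n,p,r)=(2,2,1)$ and $\F=\D=\F_2$, where the descent fails and where, at any rate, the statement is just the affine Flanders theorem over $\F_2$ of \cite{affpres}, whose extremal spaces are $\calR(0,1)$, $\calR(1,0)$ and $\calU_2$. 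This yields case (c) and completes the proof.
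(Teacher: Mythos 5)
Your proposal has two genuine gaps, each fatal to one of its two branches.

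First, in the infinite case, the entire difficulty of Flanders-type theorems is concentrated in the step you label ``the crux'' and then merely assert: namely, passing from the relations $D(N)=0$, $C(N_1)B(N_2)+C(N_2)B(N_1)=0$ to the dimension bound $\dim_\F\calW\leq dnr$ and to the dichotomy $B\equiv 0$ or $C\equiv 0$ in the equality case. No argument is given, and none of the classical arguments transfers for free: Flanders's own treatment of this step uses determinantal/exterior-algebra identities that are unavailable over $\D$, and the Atkinson--Lloyd style refinements use $\D$-linearity of $\calV$, whereas here $\calV$ is only an $\F$-subspace (so, e.g., $\{B(N)\}$ and $\{C(N)\}$ are only $\F$-submodules, and the bilinear relation cannot be exploited $\D$-linearly). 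The parenthetical fallback (``confine $\calV$ in a proper compression space and induct'') is likewise unsubstantiated and does not obviously yield the bound $dnr$ rather than a weaker one. As written, this branch restates the theorem rather than proving it. (For comparison, the paper circumvents this entirely: it inducts on $(n,p,r)$ by finding a hyperplane $H$ with $\dim_\F V_H<dr$ via Lemma~\ref{keylemma}, then either strips a row and a column using the extraction lemma, or strips a column and analyses the discarded column as a range-compatible homomorphism via Proposition~\ref{RClemma}; no Schur-complement relations and no infinite/finite dichotomy are needed.)

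Second, the finite branch is broken at its first step: extending scalars from $\F$ to $\overline{\F}$ does \emph{not} turn $\calV$ into a rank-$\overline{r}$ subset of $\prod_{i=1}^d\Mat_{n,p}(\overline{\F})$. A bounded-rank condition is a collection of polynomial identities verified only at the finitely many points of $\calV$, and it need not propagate to the $\overline{\F}$-span; the space $\calU_2$ itself is the standard counterexample, since $x(x+1)=0$ holds on $\F_2$ but not on $\overline{\F_2}$, so its scalar extension contains invertible matrices. You acknowledge a failure only at $(n,p,r)=(2,2,1)$ over $\F_2$, but you give no reason why this is the \emph{only} failure, and your fallback --- citing the affine Flanders theorem over fields \cite{affpres} --- covers only the case $\F=\D$. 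The genuinely new content of the theorem in the finite setting is precisely the case $\F\subsetneq\D$ (additive subgroups of $\Mat_{n,p}(\F_q)$ that are not $\F_q$-subspaces, as in the paper's Corollary), and for that case your argument offers nothing once the scalar-extension step collapses. Both branches therefore need to be replaced by an actual argument; the paper's uniform induction is one way to do so.
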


The case when $n=p$, $r=n-1$, $\F=\calZ(\D)$, $\D$ is infinite and $\calV$ is a linear subspace is Theorem 2.1 of \cite{Semrlcentralsimple}.

As in Flanders's theorem, the case $n \leq p$ can be deduced from our theorem.
Beware however that the transposition does not leave the rank invariant!
If we denote by $\D^{\op}$ the \emph{opposite} division ring\footnote{The ring $\D^{\op}$ has the same underlying abelian group, and its multiplication is defined as $x \underset{\op}{\times} y:=yx$.}, then $A \in \Mat_{n,p}(\D) \mapsto A^T \in \Mat_{p,n}(\D^{\op})$
is an $\F$-linear bijection that is rank preserving and that reverses products. Thus, the case $n \geq p$ over $\D^{\op}$
yields the case $n \leq p$ over $\D$.

Taking $\D$ as a finite field and $\F$ as its prime subfield, we obtain the following corollary on
additive subgroups of matrices:

\begin{cor}
Let $\F$ be a finite field with cardinality $q$. Let $n,p,r$ be positive integers such that $n \geq p \geq r$.
Let $\calV$ be an additive subgroup of $\Mat_{n,p}(\F)$ in which every matrix has rank at most $r$.
Then, $\# \calV \leq q^{nr}$, and if equality holds then either $\calV$ is equivalent to $\calR(0,r)$,
or $n=p$ and $\calV$ is equivalent to $\calR(r,0)$.
\end{cor}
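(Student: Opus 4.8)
The plan is to derive this corollary directly from Theorem~\ref{Flandersskew}. Write $\ell$ for the characteristic of $\F$ and $\F_0$ for its prime subfield, so that $\F_0$ has $\ell$ elements, and set $d:=[\F:\F_0]$, whence $q=\ell^d$. The elementary observation that makes everything work is that, since $\F_0$ is the prime field, every additive subgroup of the $\F$-vector space $\Mat_{n,p}(\F)$ is automatically closed under multiplication by the scalars $\overline{0},\overline{1},\dots,\overline{\ell-1}$ that constitute $\F_0$ (indeed $\overline{k}\,M=M+\cdots+M$, with $k$ terms), and conversely every $\F_0$-linear subspace is an additive subgroup; thus the additive subgroups of $\Mat_{n,p}(\F)$ are exactly its $\F_0$-linear subspaces.

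Now I would apply Theorem~\ref{Flandersskew} with the division ring $\D:=\F$, the subfield $\F_0$ of its center (over which $\D$ has dimension $d$), and the $\F_0$-linear subspace $\calV$, which by hypothesis is a rank-$\overline{r}$ subset with $n\geq p\geq r$. This yields $\dim_{\F_0}\calV\leq dnr$; since $\#\calV=\ell^{\dim_{\F_0}\calV}$ and $\ell^d=q$, we get $\#\calV\leq\ell^{dnr}=q^{nr}$, as wanted.

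Finally, suppose $\#\calV=q^{nr}$, i.e.\ $\dim_{\F_0}\calV=dnr$, so that the classification part of Theorem~\ref{Flandersskew} applies. Conclusions (a) and (b) are precisely the ones claimed, so it suffices to rule out conclusion (c). In that case $\calV$ would be equivalent to $\calU_2$; but $0\notin\calU_2$ (this would force $x=0$ and $x+1=0$ simultaneously in a field of characteristic~$2$), and since an equivalence $N\mapsto PNQ$ sends $0$ to $0$, no space equivalent to $\calU_2$ contains the zero matrix, whereas $\calV$ does, being an additive subgroup. Hence (c) cannot occur. The argument has no genuine obstacle beyond this last verification and the identification of additive subgroups with $\F_0$-subspaces: the entire substance lies in Theorem~\ref{Flandersskew}.
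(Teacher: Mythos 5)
Your proposal is correct and is exactly the derivation the paper intends (the paper states the corollary without proof, as the immediate specialization of Theorem \ref{Flandersskew} to $\D=\F$ with $\F_0$ the prime subfield). The identification of additive subgroups with $\F_0$-subspaces, the cardinality count $\#\calV=\ell^{\dim_{\F_0}\calV}$, and the exclusion of case (c) via $0\notin\calU_2$ are all the right, and only, points to check.
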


Broadly speaking, the proof of Theorem \ref{Flandersskew} will revive some of Dieudonn\'e's original ideas from \cite{Dieudonne} and
will incorporate some key innovations. The main idea is to work by induction over all parameters $n,p,r$,
with special focus on the rank $1$ matrices in the translation vector space of $\calV$.
In a subsequent article, this new strategy will be used to improve the classification of large rank-$\overline{r}$ spaces
over fields.

The proof of Theorem \ref{Flandersskew} is laid out as follows:
Section \ref{lemma} consists of a collection of three basic lemmas.
The inductive proof of Theorem \ref{Flandersskew} is then performed in the final section.

\section{Basic results}\label{lemma}

\subsection{Extraction lemma}

\begin{lemma}[Extraction lemma]\label{extractionlemma}
Let $M=\begin{bmatrix}
A & C \\
B & d
\end{bmatrix}$ be a matrix of $\Mat_{n,p}(\D)$, with $A \in \Mat_{n-1,p-1}(\D)$.
Assume that $\rk(M) \leq r$ and $\rk(M+E_{n,p}) \leq r$.
Then, $\rk A \leq r-1$.
\end{lemma}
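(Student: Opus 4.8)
The plan is to argue by contradiction. Since $A$ is a submatrix of $M$ we automatically have $\rk A \leq \rk M \leq r$, so it suffices to rule out $\rk A = r$. The key observation is that, for all $P \in \GL_{n-1}(\D)$, $Q \in \GL_{p-1}(\D)$, $V \in \Mat_{1,n-1}(\D)$ and $U \in \Mat_{p-1,1}(\D)$, the matrices $\begin{bmatrix} P & 0 \\ V & 1\end{bmatrix}$ and $\begin{bmatrix} Q & U \\ 0 & 1\end{bmatrix}$ are invertible and satisfy
$$\begin{bmatrix} P & 0 \\ V & 1\end{bmatrix} E_{n,p} \begin{bmatrix} Q & U \\ 0 & 1\end{bmatrix}=E_{n,p}$$
(a one-line block computation). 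Consequently the transformation $M \longmapsto \begin{bmatrix} P & 0 \\ V & 1\end{bmatrix} M \begin{bmatrix} Q & U \\ 0 & 1\end{bmatrix}$ preserves \emph{simultaneously} the ranks of $M$ and of $M+E_{n,p}$, so we are free to replace $M$ by any matrix in this orbit.

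Assuming $\rk A = r$, I would first use left and right multiplication by $\Diag(P_0,1)$ and $\Diag(Q_0,1)$ to bring $A$ into reduced form $\begin{bmatrix} I_r & 0 \\ 0 & 0\end{bmatrix}$ (standard over a division ring; in particular this forces $r \leq \min(n-1,p-1)$, so the blocks below make sense). Then, choosing $U$ and $V$ appropriately — that is, subtracting suitable right-combinations of the first $r$ columns from the last column, and suitable left-combinations of the first $r$ rows from the last row — I would clear the top $r$ entries of the last column and the left $r$ entries of the last row. At that point $M$ has become block-diagonal,
$$M = I_r \oplus N, \qquad N = \begin{bmatrix} 0 & C' \\ B' & \delta\end{bmatrix} \in \Mat_{n-r,p-r}(\D),$$
and, crucially, $E_{n,p}$ has survived all of these operations, so $M + E_{n,p} = I_r \oplus (N+E_{n-r,p-r})$, where $E_{n-r,p-r}$ denotes the matrix unit at the bottom-right corner of $\Mat_{n-r,p-r}(\D)$.

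Finally I would invoke additivity of the rank on block-diagonal matrices: $\rk M = r + \rk N \leq r$ forces $N = 0$, hence $\delta = 0$; likewise $\rk(M+E_{n,p}) = r + \rk(N+E_{n-r,p-r}) \leq r$ forces $N + E_{n-r,p-r} = 0$, hence $\delta + 1 = 0$. Together these give $0 = 1$ in $\D$, which is absurd since $\D$ is non-trivial. Therefore $\rk A \leq r-1$.

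The only point requiring mild care is the bookkeeping that the column- and row-clearing operations are genuinely of the allowed form (block-triangular with a $1$ in the corner) and that they really fix $E_{n,p}$; beyond that there is no obstacle, the whole argument being a reduction to the trivial fact that a matrix and the same matrix with one corner entry increased by $1$ cannot both be zero.
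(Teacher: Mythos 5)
Your proof is correct and rests on the same mechanism as the paper's: reduce $A$ to $\begin{bmatrix} I_r & 0 \\ 0 & 0\end{bmatrix}$ and use the two rank hypotheses to force the corner entry to equal both $0$ and $-1$ (in the paper, $d+\delta-B_1C_1=0$ for $\delta\in\{0,1\}$). The only cosmetic difference is that the paper extracts an $(r+1)\times(r+1)$ submatrix and performs a single row operation, whereas you block-diagonalize the full matrix using operations that fix $E_{n,p}$ and invoke additivity of rank on block-diagonal matrices; both routes are equally valid over a division ring.
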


\begin{proof}
Assume on the contrary that $\rk A \geq r$.
Without loss of generality, we can assume that
$$A=\begin{bmatrix}
I_r & [?]_{r \times (p-1-r)} \\
[?]_{(n-1-r) \times r} & [?]_{(n-1-r)\times (p-1-r)}
\end{bmatrix}.$$
We write $C=\begin{bmatrix}
C_1 \\
[?]_{(n-1-r)}
\end{bmatrix}$ and $B=\begin{bmatrix}
B_1 & [?]_{1 \times (p-r-1)}
\end{bmatrix}$ with $C_1 \in \Mat_{r,1}(\D)$ and $B_1 \in \Mat_{1,r}(\D)$.
Then, by extracting sub-matrices, we find that for all $\delta \in \{0,1\}$, the matrix
$$H_\delta:=\begin{bmatrix}
I_r & C_1 \\
B_1 & d+\delta
\end{bmatrix}$$
has rank less than or equal to $r$.
Multiplying it on the left with the invertible matrix $\begin{bmatrix}
I_r & [0]_{r \times 1} \\
-B_1 & 1
\end{bmatrix}$, we deduce that
$$\forall \delta \in \{0,1\}, \quad \rk \begin{bmatrix}
I_r & C_1 \\
[0]_{1 \times r} & d+\delta-B_1C_1
\end{bmatrix} \leq r.$$
This would yield
$$\forall \delta \in \{0,1\}, \quad d+\delta-B_1C_1=0,$$
which is absurd. Thus, $\rk A <r$, as claimed.
\end{proof}

\subsection{Range-compatible homomorphisms on $\Mat_{n,p}(\D)$}\label{rangecompatiblesection}

\begin{Def}
Let $U$ and $V$ be right vector spaces over $\D$, and $\calS$ be a subset of $\calL(U,V)$, the set of all linear maps from $U$ to $V$.
A map $F : \calS \rightarrow V$ is called \textbf{range-compatible} whenever
$$\forall s \in \calS, \; F(s) \in \im s.$$
\end{Def}

The concept of range-compatibility was introduced and studied in \cite{dSPRC1}.
Here, we shall need the following basic result:

\begin{prop}\label{RClemma}
Let $n,p$ be non-negative integers with $n \geq 2$, and
$F : \Mat_{n,p}(\D) \rightarrow \D^n$ be a range-compatible (group) homomorphism.
Then, $F : M \mapsto MX$ for some $X \in \D^p$.
\end{prop}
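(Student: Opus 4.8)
The plan is to argue by induction on $p$, reducing each range-compatible homomorphism on $\Mat_{n,p}(\D)$ to one on $\Mat_{n,p-1}(\D)$ by restricting to suitable subspaces. First I would handle the base case $p=0$ (where the statement is vacuous, $F=0$) and the case $p=1$: there $\Mat_{n,1}(\D)=\D^n$ and range-compatibility says $F(v)\in v\D$ for every column $v$; using that $F$ is additive and testing on the standard basis vectors $e_1,\dots,e_n$ and their sums, one gets scalars $\lambda_i\in\D$ with $F(e_i)=e_i\lambda_i$, and the compatibility on $e_i+e_j$ (for $n\geq 2$) forces $\lambda_i=\lambda_j$; hence $F(v)=v\lambda$, i.e. $F:M\mapsto MX$ with $X=(\lambda)\in\D^1$. (One must be slightly careful that $F$ is only a group homomorphism, not $\D$-linear, so all scalars are produced by hand from additivity, exactly as in the rank-$1$ warm-up.)

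For the inductive step, write a general $M\in\Mat_{n,p}(\D)$ as $M=[\,M'\mid c\,]$ with $M'\in\Mat_{n,p-1}(\D)$ its first $p-1$ columns and $c\in\D^n$ its last column. Restricting $F$ to the subspace $\{[\,M'\mid 0\,]\}\cong \Mat_{n,p-1}(\D)$ gives a range-compatible homomorphism there, so by induction there is $X'\in\D^{p-1}$ with $F([\,M'\mid 0\,])=M'X'$ for all $M'$. Similarly, restricting $F$ to $\{[\,0\mid c\,]\}\cong\D^n$ and using the $p=1$ case yields $\mu\in\D$ with $F([\,0\mid c\,])=c\mu$. Since $F$ is additive and $[\,M'\mid c\,]=[\,M'\mid 0\,]+[\,0\mid c\,]$, we obtain $F(M)=M'X'+c\mu=M\begin{bmatrix}X'\\ \mu\end{bmatrix}$, which is the desired conclusion with $X=\begin{bmatrix}X'\\ \mu\end{bmatrix}\in\D^p$. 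The only genuine obstacle is making sure the image vector $F([\,M'\mid c\,])$ really lies in $\im M'+c\D$ in a way compatible with both partial formulas simultaneously — but this is automatic once additivity is used, since $F$ is a homomorphism and the two restrictions already pin $F$ down on a spanning set of subspaces.

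The main subtlety to watch, and the step I expect to require the most care, is the base case $p=1$: one must extract the scalars $\lambda_i$ purely from the additive structure and the membership conditions $F(v)\in v\D$, and then see why $n\geq 2$ is exactly what is needed to force all the $\lambda_i$ to coincide (for $n=1$ the map $v\mapsto F(v)$ can be any additive endomorphism of $\D$ landing in $v\D=\D$, which need not be right multiplication). Everything downstream — the splitting $M=[\,M'\mid c\,]$ and the additivity argument — is then routine. I would also remark that no cardinality hypothesis on $\D$ is needed here, since the argument never multiplies by scalars outside $\D$ and never invokes determinants.
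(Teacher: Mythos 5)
Your overall architecture --- reduce to the single-column case $p=1$ by splitting off columns and using additivity --- is exactly the paper's (the paper treats all $p$ columns at once via $F([C_1\cdots C_p])=\sum_k F_k(C_k)$ rather than inducting on $p$, but that is only packaging). The inductive step is sound: the restriction of $F$ to matrices supported on one block of columns is indeed range-compatible because $[\,M'\mid 0\,]$ and $M'$ have the same image, and additivity then assembles the two partial formulas.

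The gap is in the base case $p=1$, which you yourself single out as the delicate step. Producing scalars $\lambda_i$ with $F(e_i)=e_i\lambda_i$ and forcing $\lambda_i=\lambda_j$ via $e_i+e_j$ does \emph{not} yield $F(v)=v\lambda$ for a general $v\in\D^n$: since $F$ is only a group homomorphism and not right $\D$-linear, the vectors $e_1,\dots,e_n$ do not generate $\D^n$ as an abelian group (that would require all $e_ia$ with $a\in\D$), so the values $F(e_i)$ together with additivity do not determine $F$ anywhere off the $\Z$-span of the $e_i$. The correct argument --- the one in the paper --- assigns to \emph{every} non-zero $v$ a scalar $\lambda_v$ with $F(v)=v\lambda_v$ (this exists by range-compatibility alone, since $\im v=v\D$), then shows $\lambda_v=\lambda_w$ for every non-collinear pair $(v,w)$ by expanding $F(v+w)=(v+w)\lambda_{v+w}=v\lambda_v+w\lambda_w$ and comparing coefficients in the right-independent family $(v,w)$; collinear non-zero pairs are then handled by passing through a third vector outside $v\D$, and this is where $n\geq 2$ does its real work (not merely in guaranteeing that $e_i+e_j$ exists). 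The repair is short, but as written the step ``hence $F(v)=v\lambda$'' is a non sequitur.
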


Of course here $\Mat_{n,p}(\D)$ is naturally identified with the group of all linear mappings from $\D^p$ to $\D^n$.

\begin{proof}
We start with the case when $p=1$. Then, $F$ is simply an endomorphism of $\D^n$ such that
$F(X)$ is (right-)collinear to $X$ for all $X \in \D^n$. Then, it is well-known that $F$ is a right-multiplication map:
we recall the proof for the sake of completeness.
For every non-zero vector $X \in \D^n$, we have a scalar $\lambda_X \in \D$ such that $F(X)=X\,\lambda_X$.
Given non-collinear vectors $X$ and $Y$ in $\D^n$, we have
$$X\,\lambda_{X+Y}+Y\,\lambda_{X+Y}=(X+Y)\,\lambda_{X+Y}=F(X+Y)=F(X)+F(Y)=X\,\lambda_X+Y\,\lambda_Y$$
and hence $\lambda_X=\lambda_{X+Y}=\lambda_Y$. Given collinear non-zero vectors $X$ and $Y$ of $\D^n$, we can find
a vector $Z$ in $\D^n \setminus (X \D)$ (since $n \geq 2$), and it follows from the first step that $\lambda_X=\lambda_Z=\lambda_Y$.
Thus, we have a scalar $\lambda \in \D$ such that $F(X)=X\lambda$ for all $X \in \D^n \setminus \{0\}$, which holds
also for $X=0$.

Now, we extend the result. As $F$ is additive there are group endomorphisms $F_1,\dots,F_p$ of $\D^n$ such that
$$F : \begin{bmatrix}
C_1 & \cdots & C_p
\end{bmatrix} \mapsto \sum_{k=1}^p F_k(C_k).$$
By applying the range-compatibility assumption to matrices with only one non-zero column, we see that each map $F_k$ is range-compatible.
This yields scalars $\lambda_1,\dots,\lambda_p$ in $\D$ such that
$$F : \begin{bmatrix}
C_1 & \cdots & C_p
\end{bmatrix} \mapsto \sum_{k=1}^p C_k\,\lambda_k.$$
Thus, with $X:=\begin{bmatrix}
\lambda_1 \\
\vdots \\
\lambda_p
\end{bmatrix}$ we find that $F : M \mapsto MX$, as claimed.
\end{proof}

\subsection{On the rank $1$ matrices in the translation vector space of a rank-$\overline{r}$ affine subspace}\label{rank1}

\begin{Not}
Let $S$ be a subset of $\Mat_{n,p}(\D)$ and
$H$ be a linear hyperplane of the $\D$-vector space $\D^p$. We define
$$S_H:=\{M \in S : \; H \subset \Ker M\}.$$
\end{Not}

Note that $S_H$ is an $\F$-linear subspace of $S$ whenever $S$ is an $\F$-linear subspace of $\Mat_{n,p}(\D)$.

Let $\calS$ be an $\F$-affine rank-$\overline{k}$ space, with translation vector space $S$.
In our proof of Flanders's theorem, we shall need to find a hyperplane $H$
such that the dimension of $S_H$ is small.
This will be obtained through the following key lemma.

\begin{lemma}\label{keylemma}
Let $\calV$ be an $\F$-affine rank-$\overline{r}$ subspace of $\Mat_{n,p}(\D)$, with $p>r>0$.
Denote by $V$ its translation vector space. Assume that
$\dim_\F V_H \geq dr$ for every linear hyperplane $H$ of the $\D$-vector space $\D^p$.
Then, $\calV$ is equivalent to $\calR(r,0)$.
\end{lemma}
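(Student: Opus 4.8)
The hypothesis is that $\dim_\F V_H \geq dr$ for \emph{every} hyperplane $H$ of $\D^p$, and the goal is to show $\calV$ is equivalent to $\calR(r,0)$, i.e.\ (after a change of basis on the right) every matrix of $\calV$ has its last $p-r$ columns zero. Since $\calR(r,0)$ consists of matrices supported on a fixed set of $r$ columns, the natural target is to produce a single $(p-r)$-dimensional subspace $W \subseteq \D^p$ that lies in the kernel of \emph{every} matrix of $\calV$; then, choosing a basis of $\D^p$ whose last $p-r$ vectors span $W$, one gets $\calV \subseteq \calR(r,0)$, and a dimension count forces equality (using $\dim_\F \calR(r,0) = dnr$ and the already-established inequality~\eqref{ineq} applied, if needed, together with the hypothesis). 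So the crux is the existence of such a common kernel hyperplane-chain, which I would extract from the $V_H$'s.

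The first step is a counting/dimension argument. For a fixed matrix $A \in \calV$ of maximal rank $\rho \leq r$ within $\calV$, look at $\Ker A$, a subspace of $\D$-dimension $p - \rho \geq p-r$. For any hyperplane $H$ of $\D^p$, the space $\calV_H := A + V_H$ consists of matrices of rank $\leq r$ vanishing on $H$; such matrices factor through the $1$-dimensional quotient $\D^p/H$, so $\calV_H$ is (affinely) isomorphic to a rank-$\overline{r}$ subset of $\Mat_{n,1}(\D) = \D^n$, whence $\dim_\F V_H \leq dr$. Combined with the hypothesis $\dim_\F V_H \geq dr$, we get $\dim_\F V_H = dr$ for \emph{every} hyperplane $H$, and moreover $V_H$ maps \emph{onto} a maximal (i.e.\ $dr$-dimensional, hence rank exactly... wait, onto a full rank-$\overline{r}$ subspace of $\D^n$ of maximal $\F$-dimension) — in particular, for each $H$ there is a matrix in $V_H$ of rank exactly $r$ whose image is an arbitrary prescribed... more carefully: $V_H \cong$ a $dr$-dimensional $\F$-subspace of $\D^n$, and a $dr$-dimensional $\F$-subspace of $\D^n$ all of whose elements have rank $\leq r$ — viewing $\D^n \cong \Mat_{n,1}(\D)$, rank $\leq r$ is automatic unless $r \geq 1$... hmm, here "rank of a column vector" is $0$ or $1$, so the rank-$\overline r$ condition on $\Mat_{n,1}(\D)$ is vacuous for $r \geq 1$; the real constraint is that $A + V_H \subseteq \calV$ has rank $\leq r$ as $n\times p$ matrices. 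Let me restate: the map $M \mapsto (M \text{ restricted to a transversal line of } H)$ identifies $V_H$ with an $\F$-subspace of $\D^n$; rank of $M$ as an $n\times p$ matrix equals rank of this column (since $H \subseteq \Ker M$), so the rank-$\overline r$ hypothesis gives nothing new for $r\geq 1$, and the bound $\dim_\F V_H \leq dn$ is what's trivially available — but we need $\leq dr$, which must come instead from the rank-$\overline r$ condition on the whole of $\calV$, not just on $\calV_H$. So the actual first step is subtler: one uses that $A$ has rank $\leq r$ to control how large $V_H$ can be, namely $V_H \subseteq \{M : \im M \subseteq \im A + (\text{something of dim} \leq r-\rho)\}$... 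This is exactly where the Extraction Lemma and range-compatibility should enter.

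Here is the structure I would actually follow. Pick $A \in \calV$ with $\rho := \rk A$ maximal. Step (1): show $\rho = r$; if $\rho < r$ then for generic $H$ the space $V_H$ would be forced to be large but every vector it adds to $A$ can increase the rank, and careful bookkeeping (via the Extraction Lemma applied to $2\times 2$ subpatterns of $A + tN$, $N \in V$) contradicts $\dim_\F V_H \geq dr$. Step (2): with $\rho = r$, fix a basis so that $A = \begin{bmatrix} I_r & 0 \\ 0 & 0\end{bmatrix}$ (after equivalence). For a hyperplane $H$, the condition $\rk(A+N) \leq r$ for all $N \in V_H$ with $N$ supported on the transversal column forces — via the $2\times 2$ Extraction Lemma exactly as in Lemma~\ref{extractionlemma} — that the entries of $N$ outside the "north" block are constrained; pushing this over all $H$ yields that $V$ itself is supported, columnwise, in a way that makes $\calV \subseteq \calR(r,0)$ after one further right-equivalence. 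Step (3): dimension count $\dim_\F \calV = dnr = \dim_\F \calR(r,0)$ upgrades the inclusion to equivalence. The main obstacle is Step (2): extracting from the \emph{local} information "$\dim_\F V_H = dr$ for all $H$" a \emph{single global} hyperplane (or rather $(p-r)$-codimensional subspace on the left side of columns) killed by all of $\calV$ — this is where I expect to need range-compatibility of a cleverly built homomorphism (Proposition~\ref{RClemma}) to produce a fixed vector $X$ witnessing the compression structure, and the $(n,p,r)=(2,2,1)$, $\#\D = 2$ exception will surface precisely as the case where the range-compatible homomorphism argument has a sporadic failure.
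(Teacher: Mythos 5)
There is a genuine gap, and it starts at the very first line of your plan: you have the two compression spaces switched. By the paper's definition, $\calR(r,0)$ is the set of matrices whose last $n-r$ \emph{rows} vanish, i.e.\ whose images all lie in a fixed $r$-dimensional subspace of $\D^n$; its $\F$-dimension is $dpr$, not $dnr$. The space of matrices whose last $p-r$ columns vanish is $\calR(0,r)$. Consequently your central objective --- ``produce a single $(p-r)$-dimensional subspace $W\subseteq\D^p$ lying in the kernel of every matrix of $\calV$'' --- aims at the wrong target and is in fact unattainable under the lemma's hypotheses: $\calR(r,0)$ itself satisfies them and contains $E_{1,1},\dots,E_{1,p}$, whose kernels intersect in $\{0\}$. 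Two further structural problems follow. First, your Step (3) invokes ``$\dim_\F\calV=dnr$'', but the lemma carries no dimension hypothesis; everything must be deduced from ``$\dim_\F V_H\ge dr$ for all $H$'' alone (indeed the lemma is applied contrapositively in the main proof, to \emph{find} a hyperplane with $\dim_\F V_H<dr$ when $\calV$ is not equivalent to $\calR(r,0)$). Second, you defer the key global step to Proposition~\ref{RClemma} and predict that the $(n,p,r,\#\D)=(2,2,1,2)$ exception will surface here; neither happens. Range-compatibility plays no role in this lemma, and $\calU_2$ does not satisfy its hypotheses (for $H=\D\times\{0\}$ one gets $V_H=\{0\}$); that exception belongs to Case 2 of the main induction.

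The actual argument is more elementary than what you sketch and goes through the \emph{image}, not the kernel. Take $A\in\calV$ of maximal rank $s$ and any hyperplane $H$ not containing $\Ker A$; normalizing so that $A$ is $I_s$ in the top-left corner and zero elsewhere, and $H=\D^{p-1}\times\{0\}$, every $N\in V_H$ is supported on the last column, and $\rk(A+N)\le s$ forces the last $n-s$ entries of that column to vanish. Hence $\dim_\F V_H\le ds$, which combined with the hypothesis gives $s=r$ and shows that $V_H$ is \emph{exactly} the $dr$-dimensional space of maps vanishing on $H$ with image in $\im A$. Varying $H$ over $\D^{p-1}\times\{0\}$ and the hyperplanes $\{x_j=x_p\}$ then yields $E_{i,j}\in V$ for all $i\le r$ and all $j$, i.e.\ $\calR(r,0)\subseteq V$ (whence $0\in\calV$ and $\calV=V$); a final appeal to the maximality of the rank shows that no element of $\calV$ can have a nonzero entry in its last $n-r$ rows, so $\calV=\calR(r,0)$. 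Your Step (1) instinct --- pin down the maximal rank first --- is the right one, but the bound $\dim_\F V_H\le ds$ comes from this one-column observation, not from the Extraction Lemma or from range-compatibility.
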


\begin{proof}
Denote by $s$ the maximal rank among the elements of $\calV$.
Let us consider a matrix $A$ of $\calV$ with rank $s$.
Given a linear hyperplane $H$ of $\D^p$ that does not include $\Ker A$,
let us set
$$T_H:=\sum_{M \in V_H} \im M.$$
We claim that $T_H=\im A$.
To support this, we lose no generality in assuming that
$$A=J_s:=\begin{bmatrix}
I_s & 0 \\
0 & 0
\end{bmatrix} \quad \text{and $H=\D^{p-1} \times \{0\}$.}$$
The elements of $V_H$ are the matrices of $V$ whose columns are all zero with the possible exception of the last one.
For an arbitrary element $N$ of $V_H$, we must have $\rk(A+N) \leq s$ as $A+N$ belongs to $\calV$.
This shows that the last $n-s$ rows of $N$ must equal zero.
It follows that $\dim_\F V_H \leq ds$, and our assumptions show that we must have $s=r$
and $\dim_\F V_H=dr$. In turn, this shows that $V_H$ is the set of all matrices of $\Mat_{n,p}(\D)$ with non-zero entries
only in the last column and in the first $r$ rows, and it is then obvious that $T_H=\D^r \times \{0\}= \im A$.

Now, let us get back to the general case. Without loss of generality, we can assume that $\calV$ contains $A=J_r$.
Then, taking $H=\D^{p-1} \times \{0\}$ shows that $V$ contains $E_{1,p},\dots,E_{r,p}$.
Given $(i,j) \in \lcro 1,r\rcro \times \lcro 1,p-1\rcro$, taking $H=\{(x_1,\dots,x_p) \in \D^p : \; x_j=x_p\}$ shows that
$V$ contains $E_{i,j}-E_{i,p}$, and as the $\F$-vector space $V$ also contains $E_{i,p}$
we conclude that it contains $E_{i,j}$.
It follows that $\calR(r,0) \subset V$. As $J_r \in \calV$, we deduce that $0 \in \calV$, and hence $\calV=V$.

Finally, assume that some matrix $N$ of $\calV$ has a non-zero row among the last $n-r$ ones:
then, we know from $\calR(r,0) \subset V$ that $\calV$ contains every matrix of $\Mat_{n,p}(\D)$
with the same last $n-r$ rows as $N$; at least one such matrix has rank greater than $r$, obviously.
Thus, $\calV \subset \calR(r,0)$ and we conclude that $\calV=\calR(r,0)$.
\end{proof}

\section{The proof of Theorem \ref{Flandersskew}}

We are now ready to prove Theorem \ref{Flandersskew}.
We shall do this by induction over $n,p,r$.
The case $r=0$ is obvious, and so is
the case $r=p$.

Assume now that $1\leq r<p$.
Let $\calV$ be an $\F$-affine subspace of $\Mat_{n,p}(\D)$ in which every matrix has rank at most $r$.
Denote by $V$ its translation vector space.

If $\calV$ is equivalent to $\calR(r,0)$, then it has dimension $dpr$ over $\F$, which is less than or equal to $dnr$.
Moreover, if equality occurs then $n=p$ and we have conclusion (b) in Theorem \ref{Flandersskew}.
In the rest of the proof, we assume that $\calV$ is inequivalent to $\calR(r,0)$.

Thus, Lemma \ref{keylemma} yields a linear hyperplane $H$ of $\D^p$ such that
$\dim_\F V_H < dr$. Without loss of generality, we can assume that $H=\D^{p-1} \times \{0\}$.
From there, we split the discussion into two subcases, whether $V_H$ contains a non-zero matrix or not.

\subsection{Case 1: $V_H \neq \{0\}$.}

Without further loss of generality, we can assume that $V_H$ contains $E_{n,p}$.
Let us split every matrix $M$ of $\calV$ into
$$M=\begin{bmatrix}
K(M) & [?]_{(n-1) \times 1} \\
[?]_{1 \times (p-1)} & ?
\end{bmatrix} \quad \text{with $K(M) \in \Mat_{n-1,p-1}(\D)$.}$$
Then, by the extraction lemma, we see that
$K(\calV)$ is an $\F$-affine rank-$\overline{r-1}$ subspace of $\Mat_{n-1,p-1}(\D)$.
By induction,
$$\dim_\F K(\calV) \leq d(n-1)(r-1).$$
On the other hand, by the rank theorem
$$\dim_\F \calV \leq \dim_\F K(\calV)+d(p-1)+\dim_\F V_H.$$
Hence,
$$\dim_\F \calV < d(n-1)(r-1)+d(p-1)+dr=d(nr+p-n) \leq dnr.$$
Thus, in this situation we have proved inequality \eqref{ineq}, and equality cannot occur.

\subsection{Case 2: $V_H=\{0\}$.}

Here, we split every matrix $M$ of $\calV$ into
$$M=\begin{bmatrix}
A(M) & [?]_{n \times 1}
\end{bmatrix} \quad \text{with $A(M) \in \Mat_{n,p-1}(\D)$.}$$
Then, $A(\calV)$ is an $\F$-affine rank-$\overline{r}$ subspace of $\Mat_{n,p-1}(\D)$, and as $V_H=\{0\}$ we have
$$\dim_\F A(\calV)=\dim_\F \calV.$$
By induction we have
$$\dim_\F A(\calV) \leq dnr$$
and hence
$$\dim_\F \calV \leq dnr.$$
Assume now that $\dim_\F \calV=dnr$, so that $\dim_\F A(\calV)=dnr$.
As $n>p-1$, we know by induction that $A(\calV)$ is equivalent to $\calR(0,r)$
(cases (b) and (c) in Theorem \ref{Flandersskew} are barred).
Without loss of generality, we may now assume that $A(\calV)=\calR(0,r)$.
Then, as $V_H=\{0\}$ we have an $\F$-affine map $F : \Mat_{n,r}(\D) \rightarrow \D^n$
such that
$$\calV=\biggl\{\begin{bmatrix}
N & [0]_{n \times (p-1-r)} & F(N)
\end{bmatrix}\mid N \in \Mat_{n,r}(\D)\biggr\}.$$

\begin{claim}
The map $F$ is range-compatible unless $\# \D=2$ and $(n,p,r)=(2,2,1)$.
\end{claim}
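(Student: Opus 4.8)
The plan is to write the $\F$-affine map as $F(N)=\Phi(N)+c$, where $c:=F(0)\in\D^n$ and $\Phi\colon\Mat_{n,r}(\D)\to\D^n$ is $\F$-linear, and to prove that $c=0$ and that $\Phi$ is range-compatible (the conjunction of these is exactly the assertion that $F$ is range-compatible). The hypothesis can first be restated: since the $p-1-r$ middle columns of every matrix of $\calV$ vanish, $\rk\begin{bmatrix} N & [0] & F(N)\end{bmatrix}=\rk\begin{bmatrix} N & F(N)\end{bmatrix}$, which is $\rk N$ or $\rk N+1$; hence $\calV$ being a rank-$\overline r$ set is equivalent to $F(N)\in\im N$ for every $N$ with $\rk N=r$. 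So the task is to propagate range-compatibility from the rank-$r$ matrices to all matrices.

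First I would handle the rank-$r$ matrices: for such an $N$ I claim $c\in\im N$, equivalently $\Phi(N)\in\im N$. If $\#\F\geq3$, this is immediate: choosing distinct $\lambda_1,\lambda_2\in\F\setminus\{0\}$, the matrices $\lambda_iN$ have rank $r$ and image $\im N$, so $\lambda_i\Phi(N)+c\in\im N$, and subtracting then cancelling the central unit $\lambda_1-\lambda_2$ gives $\Phi(N)\in\im N$, whence $c\in\im N$. If $\#\F=2$, then $\D$ is a finite field of characteristic $2$ (Wedderburn) and the scaling trick collapses; instead I set $W:=\im N$, assume $c\notin W$, and choose a $2$-dimensional $\F_2$-subspace $\calW$ of $\{M\in\Mat_{n,r}(\D):\im M\subseteq W\}$ all of whose nonzero elements have rank $r$ (for $r\geq2$ this is the image of the regular embedding $\F_{2^r}\hookrightarrow\Mat_r(\F_2)\subseteq\Mat_r(\D)$; for $r=1$ it is any $2$-dimensional $\F_2$-subspace of the line $W$, which exists as soon as $\#\D\geq4$). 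On $\calW$, $\Phi$ maps $0$ to $0$ and every nonzero vector into the coset $W-c$, which is disjoint from $W$ and from $\{0\}$; since an $\F_2$-subspace contained in $\{0\}\cup(W-c)$ must be at most $1$-dimensional (a sum of two nonzero members would land in $W\cap(W-c)=\varnothing$), the restriction $\Phi|_\calW$ has a nonzero kernel, and any nonzero $M\in\calW$ with $\Phi(M)=0$ satisfies $c=F(M)\in\im M=W$, a contradiction. The only situation not covered is $r=1$ with $\#\D=2$, dealt with below. Intersecting the relation $c\in\im N$ over all rank-$r$ matrices $N$, whose images exhaust the $r$-dimensional subspaces of $\D^n$ and whose intersection is $\{0\}$ because $r<n$, yields $c=0$; thus $F=\Phi$ is range-compatible on rank-$r$ matrices.

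Next I would descend to lower ranks, which is relevant only for $r\geq2$. Given $N$ with $\rk N<r$ and any $r$-dimensional $W\supseteq\im N$, write $N=\begin{bmatrix}\widehat N\\ [0]\end{bmatrix}$ in a basis with $W=\D^r\times\{0\}$; then $\widehat N\in\Mat_r(\D)$ is noninvertible, and, factoring $\widehat N=U\begin{bmatrix}I_k & [0]\\ [0] & [0]\end{bmatrix}V$ with $U,V$ invertible and $k=\rk\widehat N$, one observes that $\begin{bmatrix}I_k & [0]\\ [0] & [0]\end{bmatrix}=P+Q$ with $P$ the $r$-cycle permutation matrix and $Q:=\begin{bmatrix}I_k & [0]\\ [0] & [0]\end{bmatrix}-P$ invertible (it sends $e_j$ to $e_j-e_{j+1}$ for $j\leq k$, to $-e_{j+1}$ for $k<j<r$, and to $-e_1$ for $j=r$, which makes it onto). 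Hence $\widehat N$, and therefore $N$, is a sum of two rank-$r$ matrices whose images lie in $W$, so $\Phi(N)\in W$ by the previous step; letting $W$ vary over all $r$-dimensional subspaces containing $\im N$ — whose intersection is $\im N$, as $\dim\im N<r<n$ — we conclude $\Phi(N)\in\im N$. This proves $F=\Phi$ is range-compatible.

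Finally, the leftover case $r=1$, $\#\D=2$: here $\Mat_{n,1}(\D)\cong\F_2^n$ and $F(N)\in\{0,N\}$ for all $N\neq0$, so $\F_2^n\setminus\{0\}$ is the disjoint union of $A:=\{N\neq0:\Phi(N)=c\}$ and $B:=\{N\neq0:\Phi(N)=N+c\}$; when $c\neq0$ these are nonempty cosets of $\Ker\Phi$ and $\Ker(\Phi+\id)$, which intersect trivially, so they have sizes $2^a$ and $2^b$ with $a+b\leq n$, forcing $2^a+2^b=2^n-1$ and hence $n=2$. Thus for $n\geq3$ we get $c=0$ and range-compatibility, while for $n=2$ (which forces $p=2$) the only surviving option is the exceptional space $\calU_2$. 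The main obstacle is exactly this characteristic-$2$, small-field analysis: the scaling argument of the second paragraph is unavailable, and one must show by counting that the unique genuine failure of range-compatibility is $\D=\F_2$ with $(n,p,r)=(2,2,1)$; a secondary technical point is carrying out the explicit constructions (the embedding $\F_{2^r}\hookrightarrow\Mat_r(\F_2)$ and the decomposition into two invertible matrices) over a possibly noncommutative $\D$.
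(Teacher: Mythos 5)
Your proof is correct, but it takes a genuinely different route from the paper's. The paper proves the claim by showing that $F(N)\in G$ for every hyperplane $G\supseteq \im N$: it reduces to the hyperplane $\D^{n-1}\times\{0\}$, introduces the last-coordinate functional $\gamma$ on $\Mat_{n-1,r}(\D)$, and shows $\gamma=0$ by applying the \emph{induction hypothesis of the main theorem} (the dimension bound for rank-$\overline{r-1}$ affine subspaces of $\Mat_{n-1,r}(\D)$) to a fiber $\gamma^{-1}\{a\}$, whose codimension is at most $d$; the exceptional case $(2,2,1)$ over $\F_2$ drops out of the resulting inequality $n\leq 2$. You instead give a self-contained argument that never invokes the induction: you split $F=\Phi+F(0)$, establish $\Phi(N)\in\im N$ and $F(0)\in\im N$ separately on rank-$r$ matrices (central scaling when $\#\F\geq 3$; when $\F=\F_2$, Wedderburn plus the observation that an $\F_2$-subspace cannot sit inside $\{0\}\cup(W-c)$ when $c\notin W$), kill $F(0)$ by intersecting all $r$-dimensional images, and descend to lower ranks by writing any singular $r\times r$ block as a sum of two invertible matrices ($D=P+Q$ with $P$ the cyclic permutation), finally isolating the exception by a counting argument $2^a+2^b=2^n-1$ over $\F_2^n$. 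Both arguments are sound; the paper's is shorter because it piggybacks on machinery already in place, whereas yours is independent of the global induction, dualizes the paper's ``intersection of hyperplanes'' step into an ``intersection of $r$-dimensional subspaces'' step, and makes the origin of the $(2,2,1)$, $\#\D=2$ exception more transparent (a parity obstruction). Two cosmetic points: the image of the regular embedding $\F_{2^r}\hookrightarrow\Mat_r(\F_2)$ is $r$-dimensional, so you should say you take a $2$-dimensional subspace \emph{of} it; and the final remarks about $\calU_2$ for $n=2$ are not needed for the claim itself (the paper treats that case separately afterwards).
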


\begin{proof}
Throughout the proof, we assume that we are not in the situation where $\# \D=2$, $n=p=2$ and $r=1$.

Set $G_1:=\D^{n-1} \times \{0\}$. Let us prove that $F(N) \in G_1$ for all $N \in \Mat_{n,r}(\D)$ such that
$\im N \subset G_1$. We have an $\F$-affine mapping $\gamma : \Mat_{n-1,r}(\D)\rightarrow \D$ such that
$$\forall R \in \Mat_{n-1,r}(\D), \quad
F\left(\begin{bmatrix}
R \\
[0]_{1 \times r}
\end{bmatrix}\right)=\begin{bmatrix}
[?]_{(n-1) \times 1} \\
\gamma(R)
\end{bmatrix}$$
and we wish to show that $\gamma$ vanishes everywhere on $\Mat_{n-1,r}(\D)$.
Assume that this is not true and choose a non-zero scalar $a \in \D \setminus \{0\}$ in the range of $\gamma$.
Then $\calW:=\gamma^{-1} \{a\}$ is an $\F$-affine subspace of $\Mat_{n-1,r}(\D)$ with codimension at most $d$.
Moreover, as $\calV$ is a rank-$\overline{r}$ space it is obvious that every matrix in $\calW$ has rank at most $r-1$.
Thus, by induction we know that $\dim_\F \calW \leq d(n-1)(r-1)$.
This leads to $(n-1)(r-1) \geq (n-1)r-1$, and hence $n \leq 2$.
Assume that $n=2$, so that $p=2$ and $r=1$. Then, $\# \D \neq 2$.
Moreover, we must have $\dim_\F \calW \leq d(n-1)(r-1)=0$ and hence $\gamma$ is one-to-one. As $\D$ has more than $2$ elements
this yields a non-zero scalar $b \in \D$ such that $\gamma(b) \neq 0$, yielding a rank $2$ matrix in $\calV$.
Therefore, in any case we have found a contradiction.
Thus, $\gamma$ equals $0$.

We conclude that $F(N) \in G_1$ for all $N \in \Mat_{n,r}(\D)$ such that
$\im N \subset G_1$. Using row operations, we generalize this as follows: for every linear hyperplane $G$ of the right $\D$-vector space $\D^n$
and every matrix $N \in \Mat_{n,r}(\D)$, we have the implication
$$\im N \subset G \Rightarrow F(N) \in G.$$

Let then $N \in \Mat_{n,r}(\D)$. We can find linear hyperplanes $G_1,\dots,G_k$ of the right $\D$-vector space $\D^n$ such that
$\im N=\underset{i=1}{\overset{k}{\bigcap}} G_i$, and hence
$$F(N) \in \underset{i=1}{\overset{k}{\bigcap}} G_i=\im N.$$
Thus, $F$ is range-compatible.
\end{proof}

Now we can conclude. Assume first that we are not in the special situation where $(n,p,r,\# \D)=(2,2,1,2)$.
Then, we have just seen that $F$ is range-compatible. In particular this shows that $F(0)=0$, and as
$F$ is $\F$-affine we obtain that $F$ is a group homomorphism.
Proposition \ref{RClemma} yields that $F : N \mapsto NX$ for some $X \in \D^r$.
Setting
$$P:=\begin{bmatrix}
I_r & [0]_{r \times (p-1-r)} & -X \\
[0]_{(p-1-r) \times r} & I_{p-1-r} & [0]_{(p-1-r) \times 1} \\
[0]_{1 \times r} & [0]_{1 \times (p-1-r)} & 1
\end{bmatrix},$$
we see that $P$ is invertible and that $\calV\, P=\calR(0,r)$, which completes the proof in that case.

Assume finally that $(n,p,r,\# \D)=(2,2,1,2)$. Then,
$$F : \begin{bmatrix}
x \\
y
\end{bmatrix} \mapsto \begin{bmatrix}
\alpha x+\beta y+\gamma \\
\delta x+\epsilon y+\eta
\end{bmatrix}$$
for fixed scalars $\alpha,\beta,\gamma,\delta,\epsilon,\eta$.
As every matrix in $\calV$ is singular, we deduce that, for all $(x,y)\in \D^2$,
$$\begin{vmatrix}
x & \alpha x+\beta y+\gamma \\
y & \delta x+\epsilon y+\eta
\end{vmatrix}=0,$$
and hence
$$(\epsilon+\alpha)xy+(\delta+\eta)x+(\beta+\gamma)y=0.$$
Thus, $\epsilon=\alpha$, $\delta=\eta$ and $\beta=\gamma$.
Performing the column operation $C_2 \leftarrow C_2-\alpha C_1$ on $\calV$, we see that no
generality is lost in assuming that $\alpha=0$.
Then,
$$\calV=\biggl\{\begin{bmatrix}
x & \beta(y+1) \\
y & \delta(x+1)
\end{bmatrix}\mid (x,y)\in \D^2\biggr\}$$
and there are four options to consider:
\begin{itemize}
\item If $\beta=\delta=0$, then $\calV=\calR(0,1)$;
\item If $\beta=0$ and $\delta=1$, then $\calV=\calU_2$;
\item If $\beta=1$ and $\delta=0$, then the row swap $L_1 \leftrightarrow L_2$ takes $\calV$ to $\calU_2$;
\item Finally, if $\beta=\delta=1$, then the column operation $C_2 \leftarrow C_2+C_1$ followed by the row
operation $L_1 \leftarrow L_1+L_2$ takes $\calV$ to $\calU_2$.
\end{itemize}
This completes the proof of Theorem \ref{Flandersskew}.


\begin{thebibliography}{1}
\bibitem{AtkLloyd}
M. D. Atkinson, S. Lloyd,
{Large spaces of matrices of bounded rank.}
Quart. J. Math. Oxford (2)
{\bf 31} (1980), 253--262.


\bibitem{Beasley}
L.B. Beasley,
\emph{Null spaces of spaces of matrices of bounded rank.}
In: Current Trends in Matrix Theory, Elsevier, 1987, pp. 45--50.

\bibitem{Dieudonne}
J. Dieudonn\'e,
{Sur une g\'en\'eralisation du groupe orthogonal \`a quatre variables.}
Arch. Math.
{\bf 1} (1948), 282--287.

\bibitem{Flanders}
H. Flanders,
{On spaces of linear transformations with bounded rank.}
J. Lond. Math. Soc.
{\bf 37} (1962), 10--16.

\bibitem{Meshulam}
R. Meshulam,
{On the maximal rank in a subspace of matrices.}
Quart. J. Math. Oxford (2)
{\bf 36} (1985), 225--229.

\bibitem{localization}
L. Rodman, P. \v{S}emrl,
{A localization technique for linear preserver problems.}
Linear Algebra Appl.
{\bf 433} (2010), 2257--2268.

\bibitem{dSPRC1}
C. de Seguins Pazzis,
{Range-compatible homomorphisms on matrix spaces.}
Preprint, arXiv: http://arxiv.org/abs/1307.3574

\bibitem{affpres}
C. de Seguins Pazzis,
{The affine preservers of non-singular matrices.}
Arch. Math.
{\bf 95} (2010), 333--342.

\bibitem{dSPclass}
C. de Seguins Pazzis,
{The classification of large spaces of matrices with bounded rank.}
Israel J. Math.
(2015), in press.

\bibitem{dSPlinpresGL}
C. de Seguins Pazzis,
{The singular linear preservers of non-singular matrices.}
Linear Algebra Appl.
{\bf 433} (2010), 483--490.

\bibitem{Semrlcentralsimple}
P. \v{S}emrl,
{Invertibility preservers on central simple algebras.}
J. Algebra.
{\bf 408} (2014), 42--60.


\end{thebibliography}
\end{document}